\DeclareFontFamily{U}{wncy}{}
    \DeclareFontShape{U}{wncy}{m}{n}{<->wncyr10}{}
    \DeclareSymbolFont{mcy}{U}{wncy}{m}{n}
    \DeclareMathSymbol{\sh}{\mathord}{mcy}{"78}
\newtheorem{theoremB}{Theorem}
\newtheorem{thm}{Theorem}[section]
\newtheorem{prop}[thm]{Proposition}
\newtheorem{lem}[thm]{Lemma}
\newtheorem{cor}[thm]{Corollary}
\theoremstyle{definition}
\newtheorem{question}[thm]{Question}
\newcommand{\pres}[3]{\textnormal{#1} \langle #2 \mid #3 \rangle}
\newcommand{\lra}[1]{\xleftrightarrow{}^\ast_{#1}}
\newcommand{\xra}[1]{\xrightarrow{}^\ast_{#1}}
\newcommand{\xr}[1]{\xrightarrow{}_{#1}}
\newcommand{\trev}{\text{rev}}
\DeclareMathOperator{\WP}{WP}
\DeclareMathOperator{\Pre}{Pre}
\DeclareMathOperator{\Suf}{Suf}
\author{Carl-Fredrik Nyberg-Brodda}
\address{Department of Mathematics, Alan Turing Building, University of Manchester, UK.}
\email{carl-fredrik.nybergbrodda@manchester.ac.uk}
\thanks{The author gratefully acknowledges funding from the Dame Kathleen Ollerenshaw Trust, which is supporting his research at the University of Manchester.}
\title[Maximal Subgroups of Context-Free Monoids]{Non-finitely Generated Maximal Subgroups of Context-free Monoids}
\subjclass[2020]{20F10 (primary) 08A50, 20F05, 68R15, 68Q42}
\date{\today}
\begin{document}

\begin{abstract}
A finitely generated group or monoid is said to be context-free if it has context-free word problem. In this note, we give an example of a context-free monoid, none of whose maximal subgroups are finitely generated. This answers a question of Brough, Cain \& Pfeiffer on whether the group of units of a context-free monoid is always finitely generated, and highlights some of the contrasts between context-free monoids and context-free groups. Finally, we ask whether the group of units of a context-free monoid is always coherent.
\end{abstract}

\maketitle

\section{Introduction}\label{Sec: Introduction}

\noindent The study of language-theoretic properties of algebraic structures can be seen as a natural extension of the work by Thue \cite{Thue1914} in the early 20th century on the word problem (as a decision problem) for monoids. Thue, working with what today are called \textit{rewriting systems} (or \textit{semi-Thue systems}) achieved a remarkable number of results that are today easily recognisable; for example, he formulated the word problem for finitely presented monoids, and solved this problem for certain one-relation monoids. This can be seen as a first step in exploring the connection between rewriting of words -- and, more generally, formal language theory -- and algebraic structures. An{\={\i}}s{\={\i}}mov \cite{Anisimov1971} significantly strengthened this connection in 1971. For a finitely generated group $G$, he studied the set of words over a given finite generating set equal to the identity element in $G$, calling it the \textit{word problem} of $G$. He showed that this language is regular if and only if $G$ is finite. This was extended in 1983 by the remarkable theorem of Muller \& Schupp \cite{Muller1983}, who proved (when supplemented by a result of Dunwoody's \cite{Dunwoody1985}) that the word problem for $G$ is a context-free language if and only if $G$ is virtually free, i.e. $G$ has a free subgroup of finite index. The study of the word problem of groups for other classes of languages has expanded in many directions since, see. e.g. \cite{Holt2005,Brough2014,Gilman2018}.

Inspired by this success, Duncan \& Gilman \cite[\S5]{Duncan2004} introduced a generalisation of the word problem to finitely generated monoids. This generalisation is not a direct translation; indeed, the language of words equal to the identity element in a monoid is, generally speaking, completely insufficient to describe the structure of the monoid (for notable exceptions to this, see \cite{NybergBrodda2020b}). However, their generalisation, which we shall describe in \S\ref{Sec: Background}, has the same language-theoretic properties as the original definition in the case that the monoid in question is a group; furthermore, its properties does not generally depend on the finite generating set chosen for the monoid. This generalisation also has the pleasing property that a monoid has regular word problem if and only if it is finite; that is, the analogue for monoids of An{\={\i}}s{\={\i}}mov's theorem holds.

However, the structure of monoids with context-free word problem seems at present hopelessly difficult to classify in full generality. Any theorem along the lines of the Muller-Schupp theorem for monoids rather than groups would have to be exceptionally deep indeed. One initial hurdle is that there is not even a natural notion of what a ``finite index submonoid'' of a monoid should be; or rather, perhaps, there are too many, none of which is quite satisfactory for these purposes (see e.g. \cite{Jura1978,Ruskuc1998,Gray2008,Gray2011c}). Even answering the question of what a ``virtually free'' monoid should be is hence already a non-trivial task. On the other hand, some less ambitious facts about context-free monoids are known; for example, one can easily show that free monoids have context-free word problem, and recently success has been found regarding closure properties of the class of context-free monoids \cite{Brough2019}.

The study of semigroup and monoid properties inherited from and by their maximal subgroups has a seen a good deal of attention in the literature.\footnote{Indeed, A. K. Sushkevich, one of the fathers of semigroup theory, considered a problem about semigroups to be solved ``if it could be reduced to a problem about groups'' \cite[\S38]{Sushkevich1922}.} For some examples, see e.g. \cite{Golubov1975,Duncan1990,Ruskuc1999,Gray2011,Gray2011b}. This study has also seen success regarding formal language theory. For example, the author has shown that the language-theoretic properties of the word problem of a finitely presented \textit{special} monoid (see \S\ref{Sec: Background}) are completely determined by its group of units \cite{NybergBrodda2020b}. Here the group of units of a monoid $M$ is the maximal subgroup containing the identity element $1$ of $M$. The author has also proved analogous results for \textit{subspecial} and \textit{compressible} monoids \cite{NybergBrodda2020c}. Thus it is also natural to ask to what extent the word problem (in the above sense) of a monoid relates to the word problems of its maximal subgroups. 

In line with this, Brough, Cain \& Pfeiffer posed the question of whether the group of units of a context-free monoid is necessarily finitely generated.\footnote{This appears as Question~10.4 in a pre-print version of \cite{Brough2019}, available at \texttt{arXiv:1903.10493}, but not in the final published version.} In this note, we answer this question negatively. In fact, we show something stronger.

\begin{theoremB}
There exists a context-free monoid none of whose maximal subgroups are finitely generated.
\end{theoremB}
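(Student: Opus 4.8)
The plan is to exhibit a single explicitly described finitely generated monoid $M$ and to verify two things about it: that its word problem $\{\,u\,\ts\,v^{\trev}:u,v\in A^{\ast},\ u=_{M}v\,\}$ — for a finite generating set $A$, which by the generating-set independence recalled above may be chosen conveniently — is context-free; and that the $\mathcal{H}$-class of every idempotent of $M$ (that is, every maximal subgroup) is not finitely generated. For the second point one arranges that all idempotents of $M$ are $\mathcal{D}$-related, so that $H_{e}\cong U(M)$ for every idempotent $e$ (conjugation by $e$ and its one-sided inverses giving a multiplication-preserving bijection $U(M)\to H_{e}$), and that $U(M)$ is a fixed non-finitely-generated group — most naturally a free group of countably infinite rank — which can be read off the defining relations of $M$ as containing infinitely many independent units while admitting no finite generating set.

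The real work, and the main obstacle, is the first point: proving the word problem is context-free. Two warnings frame the difficulty. First, this cannot be deduced merely from $M$ having a ``tree-like'' or context-free right Cayley graph $\MCG{M}{A}$: unlike for groups, where the Muller--Schupp theorem ties context-freeness of the word problem to the geometry of the Cayley graph, a monoid's word problem is an inherently \emph{two-sided} object, and the right Cayley graph records only right multiplication. Second, the obvious devices for making the maximal subgroups large — Bruck--Reilly extensions of a non-finitely-generated group (which, by Reilly's structure theorem, are up to isomorphism the only bisimple $\omega$-semigroups) and their relatives — build in a bicyclic ``counting'' component that couples with the group coordinate so tightly that recognising the word problem would amount to comparing two unbounded counts, which no pushdown automaton can do. The monoid $M$ must therefore be balanced so that the infinitely many generators of its maximal subgroups are organised along a \emph{tree} of idempotents whose shape a single pushdown store can track, with the group element produced by an input word carried along that same store — for a free maximal subgroup, via a non-crossing-matching discipline in which a letter can cancel only one contributed at the same node of the tree.

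Concretely, I would present $M$ by a finite confluent rewriting system, describe the resulting set of normal forms and the associated tree, and then build a pushdown automaton for $\{\,u\,\ts\,v^{\trev}:u=_{M}v\,\}$ in two phases: reading $u$, it pushes onto its store an encoding of the normal form of $u$ — the position in the tree together with the still-unmatched group data — and reading $v^{\trev}$, it checks symbol by symbol that $v$ dismantles this encoding, using the one-sided invertibility of the generators involved. Verifying that this automaton is correct — equivalently, that the equality relation of $M$ is a context-free relation — is the crux; once established, it combines with the description of the maximal subgroups above to give the theorem, and shows moreover that every maximal subgroup of $M$ is a free group of infinite rank, hence coherent, which is what motivates the closing question of the paper.
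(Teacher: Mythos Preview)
Your proposal is a strategic outline, not a proof: no concrete monoid is ever exhibited, and the step you yourself flag as ``the crux'' --- constructing a pushdown automaton and verifying it recognises the word problem --- is left entirely undone. Two of your structural demands are also in tension. You want $M$ presented by a \emph{finite} confluent rewriting system while simultaneously wanting the defining relations to display infinitely many independent units; for special monoids (the natural setting, and the one the paper uses) this combination is impossible, since finite presentability forces the set $\Lambda$ of minimal invertible pieces --- and hence a generating set for $U(M)$ --- to be finite. Your discussion of Bruck--Reilly obstructions and a ``tree of idempotents'' tracked by a single stack is thoughtful but speculative: without a specific $M$ and a specific automaton there is nothing to check, and the claim that all idempotents are $\mathcal D$-related to $1$ is likewise unproven.

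The paper sidesteps all of this by allowing the rewriting system to be \emph{infinite} but language-theoretically tame. The monoid is $\Pi_2 = \pres{Mon}{a,b,c}{(ab^ic)^2=1\ (i\ge1)}$; the associated special rewriting system $\mathcal{T}_2$ is complete, and its set of left-hand sides $\{(ab^ic)^2 : i\ge1\}$ is context-free, so the result of Book, Jantzen \& Wrathall --- that any monoid defined by a context-free monadic complete rewriting system has context-free word problem --- applies immediately, and no automaton need be built by hand. The group of units is then read off via the biprefix code $\Lambda=\{ab^ic : i\ge1\}$ and Malheiro's presentation theorem as the free product of countably many copies of $C_2$, and a further theorem of Malheiro's that every maximal subgroup of a special monoid is isomorphic to its group of units handles all idempotents at once. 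The key move you are missing is precisely to drop finiteness of the presentation and replace it by context-freeness of the rule set; this makes both the non-finite generation of $U(M)$ and the context-freeness of the word problem fall out of off-the-shelf results rather than bespoke constructions.
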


As context-free groups and monoids are, as part of their definition, finitely generated, it follows further that a maximal subgroup of a context-free monoid need not be context-free itself. This highlights the fact that a general reduction of the properties of context-free monoids to those of context-free groups seems difficult. 

We remark that the monoid with the above properties constructed in this note is a non-finitely presented special monoid. Strong structural results are known for finitely presented special monoids via Makanin's work on such monoids \cite{Makanin1966b,Makanin1966}, see also Zhang \cite{Zhang1992}. However, their results are stated only in the finitely presented case, and though much of their analysis carries over to the non-finitely presented case, we have, in the interest of remaining self-contained, opted to give direct proofs of all necessary statements. These proofs should also serve to give a taste of the flavour of many of the arguments arising in the theory of special monoids and combinatorial monoid theory. We remark also that some results for finitely presented special monoids fail in the non-finitely presented case; for example, one can show that a finitely presented special monoid $M$ has context-free word problem if and only if its group of units does \cite{NybergBrodda2020b}. This equivalence no longer holds if the assumption of finite presentability is dropped; in fact, there exists a finitely generated special monoid whose word problem is not context-free, and yet having trivial (and hence context-free) group of units \cite{Thesis}.

\section{Background}\label{Sec: Background}

\noindent We assume the reader is familiar with regular and context-free languages, as well as some elementary properties of codes. For some background on this, and other topics in formal language theory, we refer the reader to standard books on the subject \cite{Harrison1978} \cite{Hopcroft1979}. The paper also assumes familiarity with the basics of the theory of monoid and group presentations, which will be written as $\pres{Mon}{A}{R}$ and $\pres{Gp}{A}{R}$, respectively. For further background information and examples of this theory, see e.g. \cite{Adian1966,Magnus1966},  \cite{Neumann1967,Lyndon1977,Campbell1995}.

We begin by fixing some notation and presenting some basics of rewriting systems. We refer the reader to the monographs by Jantzen \cite{Jantzen1988} and Book \& Otto \cite{Book1993} for a more thorough background on this topic, including its history.

Let $A$ be a finite alphabet, and let $A^\ast$ denote the free monoid on $A$, with identity element denoted $\varepsilon$ or $1$, depending on the context. Let $A^+$ denote the free semigroup on $A$, i.e. $A^+ = A^\ast \setminus \{ \varepsilon\}$. For $u, v \in A^\ast$, by $u \equiv v$ we mean that $u$ and $v$ are the same word. For $w \in A^\ast$, we let $|w|$ denote the \textit{length} of $w$, inductively defined by $|\varepsilon| = 0$ and $|wa| = |w| + 1$ for $w \in A^\ast$ and $a \in A$. We use $\Pre(w)$ (resp.\ $\Suf(w)$) to denote the set of non-empty prefixes (resp.\ non-empty suffixes) of the word $w \in A^\ast$. If two distinct words $u, v \in A^+$ are such that $\Pre(u) \cap \Suf(v) \neq \varnothing$, then we say that $u$ and $v$ \textit{overlap}. We say that $w \in A^+$ \textit{overlaps} with itself if $\Pre(w) \cap \Suf(w) \neq \{ w \}$, and we say that $w$ \textit{self-overlap free} (sometimes also called a \textit{bifix-free word} or \textit{hypersimple word}) if it does not overlap with itself. For example, the word $xyzabcxyz$ is not self-overlap free, whereas \textit{arghmgog}, on the other hand, is.\footnote{The charming word \textit{arghmgog} was invented, and used exactly once, by Turing in the introduction to his paper demonstrating the general undecidability of the word problem for finitely presented cancellative semigroups \cite[p.\ 491]{Turing1950}.} Equality of words $u, v \in A^\ast$ in the monoid $M = \pres{Mon}{A}{R}$ is denoted $u =_M v$. 

Let $G$ be a group with finite (group) generating set $A$, with $A^{-1}$ the set of inverses of the generators $A$. The language 
\[
\{ w \mid w  \in (A \cup A^{-1})^\ast, w =_G 1 \}
\]
is called the (group-theoretic) \textit{word problem} for $G$, see e.g. \cite{Anisimov1971,Muller1983}. Let $M$ be a monoid with a finite generating set $A$. Translating the above definition of the word problem directly to $M$ does not, in general, yield much insight into the structure of $M$. Duncan \& Gilman \cite[p.\ 522]{Duncan2004} instead introduced a different generalisation of the group-theoretic word problem to all monoids. The \textit{word problem of $M$ with respect to $A$} is defined as the language
\[
\{ u \# v^\trev \mid u, v \in A^\ast, u =_M v\},
\]
where $\#$ is some symbol not in $A$, and $v^\trev$ denotes the reversal of $v$, i.e. the word $v$ read backwards. We remark that although Duncan \& Gilman initialised the proper study of this language-theoretic word problem for monoids, the language in the definition was studied already by Book, Jantzen \& Wrathall in 1982, see especially \cite[Corollary~3.8]{Book1982}. This observation does not seem to appear anywhere in the literature on the word problem for monoids. We say that $M$ has \textit{context-free} word problem if this language is context-free; this can be shown to not depend on the finite generating set chosen \cite[Theorem~5.2]{Duncan2004}. Furthermore, a group has context-free word problem in the usual sense if and only if the above language is a context-free language \cite[Theorem~3]{Duncan2004}. If context is clear, we will call a monoid \textit{context-free} if its word problem is context-free. Note that non-finitely generated monoids are not context-free. 

We now give the basic definitions regarding rewriting systems. A \textit{rewriting system} $R$ on $A$ is a subset of $A^\ast \times A^\ast$. An element of $R$ is called a \textit{rule}. The system $R$ induces several relations on $A^\ast$. We will write $u \xr{R} v$ if and only if there exist $x, y \in A^\ast$ and a rule $(\ell, r) \in T$ such that $u \equiv x\ell y$ and $v \equiv xry$. We let $\xra{R}$ denote the reflexive and transitive closure of $\xr{R}$. We denote by $\lra{R}$ the symmetric and transitive closure of $\xr{R}$. The relation $\lra{R}$ defines the least congruence on $A^\ast$ containing $\xra{R}$. The monoid $\pres{Mon}{A}{R}$ will throughout this note, and without comment, be naturally be identified with the quotient $A^\ast / \lra{R}$. 

If $u \in A^\ast$ is such that there is no $u \xr{R} v$, then $u$ is said to be \textit{$R$-irreducible}. We say that $R$ is \textit{terminating} (or \textit{Noetherian}) if there is no infinite chain of the form $u_1 \xr{R} u_2 \xr{R} \cdots$, and we say that it is \textit{confluent} if for all $u, v, w \in A^\ast$, we have that $u \xra{R} v$ and $u \xra{R} w$ together imply that there exists some $z \in A^\ast$ with $v \xra{R} z$ and $w \xra{R} z$. We say that it is \textit{locally confluent} if $u \xr{R} v$ and $u \xr{R} w$ together imply that there exists some $z \in A^\ast$ with $v \xra{R} z$ and $w \xra{R} z$. Newman's classical lemma (see \cite{Newman1942}) says that a terminating and locally confluent rewriting system is confluent. If $R$ is confluent and terminating, we say it is \textit{complete}. A rewriting system $R \subseteq A^\ast \times A^\ast$ is said to be \textit{monadic} if $(u, v) \in R$ implies $|u| > |v|$ and $v \in A \cup \{ \varepsilon \}$. We say that $R$ is \textit{special} if $(u, v) \in R$ implies $|u| > |v|$ and $v \equiv \varepsilon$. The monadic rewriting system $R$ is said to be \textit{context-free} (resp.\ \textit{regular}) if for every $a \in A \cup \{ \varepsilon \}$, the set $\{ u \mid (u, a) \in R \}$ is a context-free (resp.\ regular) language. 

We fix some terminology regarding monoids. Let $M = \pres{Mon}{A}{R}$. An element $m \in M$ is a \textit{left unit} if there exists $m' \in M$ such that $m'm = 1$ in $M$. We define a \textit{right} unit analogously. We say that $m$ is a \textit{unit} if it is both a left and right unit. The subgroup of $M$ consisting of all units is called the \textit{group of units} of $M$, and is denoted $U(M)$. We say that $u \in A^\ast$ is \textit{left} (resp.\ \textit{right}) invertible in $M$ if it represents a left (resp.\ right) unit in $M$; and $u$ is \textit{invertible} in $M$ if it represents a unit. For an idempotent $e \in E(M)$ of $M$, we define the \textit{maximal subgroup of $M$ containing $e$} to be the group of units of $eMe$. The maximal subgroup of $M$ containing $e$ is a subsemigroup of $M$. It is, additionally, a group -- but note that its identity element will be $e$, rather than $1$. For an equivalent definition, the set of maximal subgroups of $M$ is the set of all group $\mathscr{H}$-classes of $M$ \cite[Ch. 2]{Clifford1961}. If $X \subseteq A^\ast$, we denote by $\langle X \rangle_M$ the submonoid of $M$ generated by the elements represented by the words in $X$. Note that $1 \in \langle X \rangle_M$.

Finally, we say that a monoid of the form $M = \pres{Mon}{A}{w_i = 1 \: (i \in I)}$ is \textit{special}, i.e. a monoid is special if the right-hand sides of all defining relations are empty. Such monoids were introduced by Tseitin \cite{Tseitin1958}, and were extensively studied by Adian \cite{Adian1960} and Makanin \cite{Makanin1966,Makanin1966b}. For a more in-depth analysis of special monoids and their history, we refer the reader to \cite{NybergBrodda2020a,NybergBrodda2020b,NybergBrodda2021e,Zhang1992}. The set $\{ w_i \mid i \in I \}$ is called the set of \textit{defining words} of $M$. We say that a non-empty invertible word is \textit{minimal} if none of its non-empty proper prefixes is invertible in $M$. We remark that minimality of $w$ is clearly seen to be equivalent to $w$ having none of its non-empty proper suffixes being invertible. We will use the term \textit{minimal invertible factor} and \textit{minimal factor} interchangeably. Each defining word is invertible in $M$, and can hence be factored uniquely as $w_i \equiv w_{i,1} w_{i,2} \cdots w_{i,\ell}$ into minimal factors. We set $\Lambda(w_i) = \{ w_{i,1}, \dots, w_{i, \ell}\}$, and let $\Lambda = \Lambda(M)$ denote the set $\bigcup_{i \in I} \Lambda(w_i)$ of all minimal words arising from the factorisation of the defining words of $M$. We say that $\Lambda$ is the set of minimal words associated to $M$. It is clear that $\Lambda$ is a biprefix code, i.e. that $\Lambda \cap A^\ast \Lambda = \Lambda \cap \Lambda A^\ast = \varnothing$.

\section{Proof of the main theorem}

\noindent Let $\Pi_2 = \pres{Mon}{a, b, c}{(ab^ic)^2 = 1 \: (i \geq 1)}$. We will throughout this section let $A = \{ a, b, c\}$. We will show that $\Pi_2$ is context-free, but that none of the maximal subgroups of $\Pi_2$ are finitely generated. We shall use the following lemma to provide a direct path to some simple structural results about the group of units of $\Pi_2$. 

\begin{lem}\label{Lem: T is a CF, complete, defining system}
The rewriting system $\mathcal{T}_2 = \{ ( (ab^ic)^2, \varepsilon) \mid i \geq 1\} \subseteq A^\ast \times A^\ast$ is a context-free, complete rewriting system which defines $\Pi_2$. 
\end{lem}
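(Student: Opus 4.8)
The plan is to verify the three claimed properties of $\mathcal{T}_2$ in turn: that it defines $\Pi_2$, that it is context-free (as a monadic/special system), and that it is complete.

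First, the fact that $\mathcal{T}_2$ defines $\Pi_2$ is essentially immediate from the definitions: the rules of $\mathcal{T}_2$ are exactly the relations $(ab^ic)^2 = 1$ for $i \geq 1$, rewritten as a special rewriting system, so $\pres{Mon}{A}{\mathcal{T}_2}$ and $\Pi_2$ have literally the same presentation, and $\lra{\mathcal{T}_2}$ is the defining congruence. I would state this in one line.

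Second, context-freeness: $\mathcal{T}_2$ is a special (hence monadic) rewriting system, so by the definition in \S\ref{Sec: Background} it suffices to check that $\{ u \mid (u, \varepsilon) \in \mathcal{T}_2 \} = \{ (ab^ic)^2 \mid i \geq 1 \} = \{ ab^icab^ic \mid i \geq 1\}$ is a context-free language. This is a standard non-regular but context-free language (a copy-language controlled by the matching exponents), recognised for instance by the context-free grammar $S \to abTcabTc$... — more carefully, one needs the \emph{two} occurrences of $b^i$ to have equal length, which a context-free grammar handles via a single nonterminal $B \to bBb \mid b$ generating $b^i \cdots b^i$ with a marker in the middle; concretely $S \to a\,X\,c$ where $X$ generates $\{ b^i c a b^i \mid i \ge 1\}$ via $X \to bXb \mid bcab$. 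Exhibiting such a grammar (or an equivalent pushdown automaton) settles context-freeness; I expect this to be routine.

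Third, completeness. Termination is clear since every rule strictly decreases length, so $\mathcal{T}_2$ is Noetherian; by Newman's lemma it then suffices to prove local confluence, i.e. to resolve all critical pairs arising from overlaps of left-hand sides. The left-hand sides are the words $\ell_i := (ab^ic)^2 = ab^icab^ic$. The key combinatorial observation — and the main point of the argument — is that each $\ell_i$ is \emph{self-overlap free} (bifix-free): a proper prefix of $ab^icab^ic$ equal to a proper suffix would force an internal period, but the two blocks $b^i$ are flanked by $a$'s and $c$'s in a pattern that admits no nontrivial overlap (any such overlap would have to align a $b$-block against an $a$ or $c$, or misalign the two $b$-blocks, both impossible by a short case analysis on where the boundary symbols $a,c$ land). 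Consequently there are no overlapping critical pairs of the form "prefix of $\ell_i$ = suffix of $\ell_j$". The only other possible critical pairs come from one left-hand side containing another as a factor, i.e. $\ell_j$ occurring as a (proper) factor of $\ell_i$ for $i \neq j$; but $|\ell_j| = 2(j+2)$ and the only $b$-blocks available in $\ell_i$ have length $i \neq j$, so $\ell_j$ cannot sit inside $\ell_i$ (its two $b$-blocks of length $j$ have nowhere to go). Hence $\mathcal{T}_2$ has \emph{no} critical pairs at all, so it is trivially locally confluent, and therefore complete.

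The main obstacle is thus the purely combinatorial verification that the words $(ab^ic)^2$ neither self-overlap nor embed in one another; once that is in hand, completeness follows with no computation, and the other two assertions are essentially definitional. I would organise the proof as: (1) one sentence that the presentation is unchanged; (2) a displayed grammar for context-freeness; (3) termination by length; (4) the self-overlap-freeness / non-embedding claim, proved by examining the positions of the boundary letters $a$ and $c$; (5) conclude via Newman's lemma.
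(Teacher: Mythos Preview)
Your overall strategy matches the paper's, but your local-confluence argument contains a genuine error: the word $\ell_i = (ab^ic)^2 = ab^icab^ic$ is \emph{not} self-overlap free. The word $ab^ic$ is simultaneously a proper prefix and a proper suffix of $\ell_i$, so $\Pre(\ell_i) \cap \Suf(\ell_i) \supsetneq \{\ell_i\}$. Concretely, the word $(ab^ic)^3$ contains two overlapping occurrences of $\ell_i$, one starting at position $1$ and one starting at position $i+3$; this yields an honest critical pair. Your case analysis ``any such overlap would have to align a $b$-block against an $a$ or $c$, or misalign the two $b$-blocks'' overlooks the obvious alignment in which the second copy of $ab^ic$ inside one occurrence of $\ell_i$ lines up with the first copy of $ab^ic$ in the other.

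Fortunately this critical pair is trivially confluent: rewriting either occurrence of $\ell_i$ in $(ab^ic)^3$ leaves the same word $ab^ic$, so the pair is resolved in zero further steps. This is exactly the observation the paper makes (``the only overlap between two left-hand sides of rules from $\mathcal{T}_2$ are of the form $ab^ic(ab^ic)ab^ic$, where $i \geq 1$, and this overlap is immediately resolved''). Your analysis that there are no overlaps between $\ell_i$ and $\ell_j$ for $i \neq j$, and no proper containments, is correct; you just need to replace the false bifix-freeness claim by the one-line resolution of the self-overlap critical pair, and then the proof goes through exactly as you outlined.
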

\begin{proof}
Obviously, $\mathcal{T}_2$ defines $\Pi_2$. As $\mathcal{T}_2$ is length-reducing, it is terminating. To show that $\mathcal{T}_2$ is complete, it thus suffices to show that it is locally confluent; but this is immediate, as the only overlap between two left-hand sides of rules from $\mathcal{T}_2$ are of the form $ab^ic(ab^ic)ab^ic$, where $i \geq 1$, and this overlap is immediately resolved. Hence $\mathcal{T}_2$ is complete. Finally, the set of left-hand sides of the empty word in $\mathcal{T}_2$ is the language $\{ ab^icab^ic \mid i \geq 1\}$, which is a context-free language.
\end{proof}

As an immediate corollary of Lemma~\ref{Lem: T is a CF, complete, defining system}, we have the following.

\begin{prop}
The special monoid $\Pi_2$ has context-free word problem. 
\end{prop}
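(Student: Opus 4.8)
The plan is to deduce the proposition directly from Lemma~\ref{Lem: T is a CF, complete, defining system} using known transfer results between complete rewriting systems and the Duncan--Gilman word problem. Since $\mathcal{T}_2$ is a context-free, complete rewriting system presenting $\Pi_2$, the set of $\mathcal{T}_2$-irreducible words forms a regular (indeed context-free) system of unique normal forms, and rewriting to normal form is a well-behaved operation. The word problem language $\{u \# v^\trev \mid u =_{\Pi_2} v\}$ should then be recognised by combining the context-free machinery for reducing $u$ and $v$ to normal form with a check that the two normal forms agree.

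First I would recall (or reprove, staying self-contained) that for a monadic complete rewriting system $R$ with context-free rule-sets, the relation $\xra{R}$ is a rational transduction, and hence for a fixed word the set of its $R$-descendants — in particular its normal form — can be tracked by a pushdown automaton; this is essentially the content of the results of Book, Jantzen \& Wrathall on monadic systems (cf.\ \cite[Corollary~3.8]{Book1982}, already cited in the excerpt). Concretely, I would note that $\mathcal{T}_2$ is special (all right-hand sides empty), so the normal form of $u$ is obtained by iteratively deleting factors of the form $(ab^ic)^2$; the language of words reducing to a given normal form, and the relation ``$u$ and $v$ have the same normal form'', are handled by the standard pushdown construction.

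Next I would assemble the automaton for $\{u \# v^\trev \mid u =_{\Pi_2} v\}$: on input $u \# v^\trev$, a pushdown automaton guesses the common normal form $n$, verifies $u \xra{\mathcal{T}_2} n$ while reading $u$, then reads $\#$, then verifies $v \xra{\mathcal{T}_2} n$ while reading $v^\trev$ (reading $v$ backwards is unproblematic since we may as well guess $n^\trev$ and match suffixes). Because each verification is context-free and they are sequenced with the connecting symbol $\#$, and because the guessed $n$ can be carried through, the resulting language is context-free. Since the property of having context-free word problem is independent of the finite generating set by \cite[Theorem~5.2]{Duncan2004}, this establishes the claim.

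The main obstacle is the bookkeeping in the pushdown construction: a single stack must simultaneously handle the nondeterministic deletions witnessing $u \xra{\mathcal{T}_2} n$ and then, after $\#$, the deletions witnessing $v \xra{\mathcal{T}_2} n$, while ensuring the \emph{same} $n$ is used on both sides. The cleanest route is to invoke the general principle that monoids presented by context-free monadic complete rewriting systems have context-free word problem — which may already be in the literature (e.g.\ \cite{Book1982,Duncan2004}) — rather than re-deriving the automaton by hand. If a citation is not readily available in the exact form needed, the fallback is the explicit construction just sketched, where the key lemma to isolate is that $\{(u, n) \mid u \xra{\mathcal{T}_2} n,\ n \text{ irreducible}\}$ is a rational transduction, from which closure of context-free languages under rational transduction and concatenation does the rest.
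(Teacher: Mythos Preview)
Your proposal is correct and takes essentially the same approach as the paper: both invoke Lemma~\ref{Lem: T is a CF, complete, defining system} together with the Book--Jantzen--Wrathall result \cite[Corollary~3.8]{Book1982} that a monoid presented by a context-free monadic (in this case special) complete rewriting system has context-free word problem. The paper's proof is precisely this one-line citation, without your fallback automaton sketch.
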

\begin{proof}
Any monoid defined by a monadic complete rewriting system has context-free word problem \cite[Corollary~3.8]{Book1982}, and by Lemma~\ref{Lem: T is a CF, complete, defining system} the system $\mathcal{T}_2$ is a special (and thus monadic) complete rewriting system defining $\Pi_2$. 
\end{proof}

\begin{lem}\label{Lem: The factorisation is (abc)(abc)}
For all $i \geq 1$, the factorisation of the word $(ab^ic)^2$ into minimal invertible factors in $\Pi_2$ is $(ab^ic)(ab^ic)$. 
\end{lem}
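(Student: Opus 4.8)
The plan is to show that the word $w_i \equiv (ab^ic)^2$ has exactly two nonempty proper prefixes that are invertible in $\Pi_2$, namely $ab^ic$ and $w_i$ itself, so that the unique factorisation into minimal invertible factors is $(ab^ic)(ab^ic)$. Since $w_i = 1$ in $\Pi_2$, the word $ab^ic$ is certainly invertible (it is its own two-sided inverse), so it suffices to prove that no nonempty proper prefix $p$ of $ab^ic$ is invertible in $\Pi_2$. The proper nonempty prefixes of $ab^ic$ are $a$, $ab$, $ab^2$, \dots, $ab^i$; I must rule out each of these.

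The key observation is that invertibility is detectable via the complete rewriting system $\mathcal{T}_2$ from Lemma~\ref{Lem: T is a CF, complete, defining system}: a word $u \in A^\ast$ is right-invertible in $\Pi_2$ if and only if there exists $v$ with $uv =_{\Pi_2} 1$, i.e. $uv \xra{\mathcal{T}_2} \varepsilon$. Now I would argue by a letter-counting / structural invariant on $\mathcal{T}_2$-reductions. Each rule deletes a factor $(ab^ic)^2 \equiv ab^ic\,ab^ic$, which contains exactly two $a$'s, two $c$'s, and $2i$ copies of $b$; in particular every rule deletes an equal number of $a$'s and $c$'s, so the quantity $|w|_a - |w|_c$ (number of $a$'s minus number of $c$'s) is invariant under $\xra{\mathcal{T}_2}$. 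If $u$ is invertible then $u$ reduces to $\varepsilon$ after appending a suitable $v$; more usefully, if $u$ is right-invertible then $uv \xra{\mathcal{T}_2}\varepsilon$ for some $v$, and combined with the fact that $\varepsilon$ has $|{\cdot}|_a - |{\cdot}|_c = 0$, one gets $|u|_a + |v|_a = |u|_c + |v|_c$. To pin down the prefixes of $ab^ic$ I would instead use a sharper invariant tailored to the \emph{left} end of a word: reading left-to-right, track the ``height'' given by $\#a - \#c$; any reduction can only delete an internal factor of the form $ab^jc\,ab^jc$, and such a factor, read left to right, starts by going up (the leading $a$) before coming back down, so it can never be the case that an irreducible word representing an invertible element begins with $b$ or $c$, and the words $ab^k$ ($1\le k\le i$) are already $\mathcal{T}_2$-irreducible but do not represent units because, for instance, their image in the abelianisation (or in the quotient by $c=1$) is nontrivial and not invertible. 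The cleanest route: map $\Pi_2$ onto the monoid $\pres{Mon}{a,b,c}{a=1,c=1}$, which is the free monoid on $b$ — indeed each relator $(ab^ic)^2$ maps to $b^{2i}$, wait, that does not map to $1$, so that quotient is not well-defined; instead map onto $\pres{Mon}{a,b,c}{b=1}$, under which each relator becomes $(ac)^2$, and this monoid is $\pres{Mon}{a,c}{(ac)^2=1}$, in which $a$ is not a unit (its image in the abelianisation $\mathbb{Z}/2\mathbb{Z}$... ), hmm, actually $(ac)^2=1$ makes $ac$ and $ca$ units but one checks $a$ itself is not a left or right unit there by a straightforward normal-form argument since $\{((ac)^2,\varepsilon)\}$ is complete and $a$ is irreducible while no word $av$ reduces to $\varepsilon$ unless it contains $acac$ as a factor, forcing $v$ to begin with $c$ and then $av = acac\cdots$; iterating, $a$ is never right-invertible. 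Since every $ab^k$ with $1\le k\le i$ maps to $a$ in this quotient, and the image of a unit is a unit, none of the $ab^k$ is invertible in $\Pi_2$.

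Assembling these pieces: the nonempty proper prefixes of $w_i$ are the $ab^k$ ($1\le k\le i$), then $ab^ic$, then $ab^ica$, $ab^icab$, \dots; but minimality only requires examining prefixes up to the first invertible one, which is $ab^ic$. By the previous paragraph none of $a, ab, \dots, ab^i$ is invertible in $\Pi_2$, while $ab^ic$ is invertible; hence $ab^ic$ is a minimal invertible factor and is the first factor in the canonical factorisation of $w_i$. The remaining suffix is again $ab^ic$, which by the same argument is minimal. Therefore $\Lambda(w_i) = \{ab^ic\}$ and the factorisation is $(ab^ic)(ab^ic)$, as claimed. The main obstacle I anticipate is the rigorous verification that $a$ (equivalently, each $ab^k$, $k\le i$) fails to be right- or left-invertible; the quotient-onto-$\pres{Mon}{a,c}{(ac)^2=1}$ trick reduces this to a clean statement about a one-relator special monoid with a two-generator complete rewriting system, where a short induction on the length of a putative inverse-witness word closes the argument.
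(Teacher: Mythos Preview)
Your overall strategy---show directly that no proper nonempty prefix of $ab^ic$ is invertible, hence $ab^ic$ is minimal---is correct and is structurally a bit cleaner than the paper's route. The paper instead does a case analysis on the number $n$ of minimal factors of $ab^ic$: for $n>2$ it derives that $a$ would be invertible, and for $n=2$ that $b$ would be, and then rules out invertibility of $a$ in $\Pi_2$ by the one-line observation that no left-hand side of $\mathcal{T}_2$ ends in $a$, so $ua$ can never rewrite to $\varepsilon$. Both arguments ultimately rest on this same elementary fact about the shape of the rules.

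There is, however, a genuine slip in your execution. In the quotient $\pres{Mon}{a,c}{(ac)^2=1}$ you assert that $a$ is not right-invertible, arguing by an ``iterating'' descent on $av$. This is false: $a \cdot cac \equiv acac \xr{} \varepsilon$, so $a$ \emph{is} right-invertible there (and likewise in $\Pi_2$ itself, since $a\cdot bcabc = 1$). What you actually need, and what is true, is that $a$ is not \emph{left}-invertible: the single rule $(acac,\varepsilon)$ ends in $c$, so any reduct of a word ending in $a$ still ends in $a$, and hence $ua$ never reduces to $\varepsilon$. That is enough, because a unit must be two-sided invertible and monoid homomorphisms send units to units. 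With this correction your quotient argument goes through. Note also that the detour via $b\mapsto 1$ is unnecessary: the identical observation applies already in $\Pi_2$, since no rule of $\mathcal{T}_2$ ends in $a$ or in $b$, so every $ab^k$ with $0\le k\le i$ fails to be left invertible in $\Pi_2$ directly---which is exactly the mechanism the paper uses.
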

\begin{proof}
Let $i \geq 1$. As $ab^ic$ is invertible in $\Pi_2$, any factorisation of $(ab^ic)^2$ into minimal invertible factors must obviously be a refinement of the factorisation $(ab^ic)(ab^ic)$. We thus consider factorisations of $ab^ic$. Suppose $ab^ic \equiv \lambda_1 \lambda_2 \cdots \lambda_n$ where the $\lambda_j \: (1 \leq j \leq n)$ are minimal invertible factors. If $n>2$, then there exists $1 < j < n$ such that $\lambda_j \equiv b^\mu$ for some $1 \leq \mu \leq i$. As $\lambda_j$ is minimal, it is not a proper power of a word. Thus $\mu=1$ and $b$ is invertible. Hence, as $\lambda_1 \equiv ab^\nu$ for some $\nu \geq 0$, it follows from the minimality of $\lambda_1$ that $\nu = 0$. Thus $\lambda_1 \equiv a$ is invertible. But $a$ is not invertible in $\Pi_2$; indeed, if it were, then it would be left invertible, so there would be some $u \in A^\ast$ such that $ua = 1$ in $\Pi_2$. But no rule of the rewriting system $\mathcal{T}_2$ ends in $a$, and hence, by induction of the number of rules applied, we cannot have $ua \xra{\mathcal{T}_2} 1$. As $\mathcal{T}_2$ is complete and defines $\Pi_2$, we therefore have $ua \neq 1$ in $\Pi_2$, a contradiction.

Thus $n \leq 2$. If $n = 2$, then $ab^ic \equiv \lambda_1 \lambda_2$, and consequently $\lambda_1 \equiv ab^\mu$ and $\lambda_2 \equiv b^\nu c$ for some $\mu, \nu \geq 1$ with $\mu + \nu = i$. But then $b$, being a prefix of the invertible $\lambda_2$ and a suffix of the invertible $\lambda_1$, is both right and left invertible, and hence invertible. Thus $\lambda_1 \equiv ab^\mu$ has a non-empty proper invertible suffix, contradicting its minimality. We conclude that $n=1$, and thus the factorisation of $ab^ic$ into minimal invertible factors is $(ab^ic)$, whence the factorisation of $(ab^ic)^2$.
\end{proof}

Let now $\Lambda = \{ ab^ic \mid i \geq 1\}$. By Lemma~\ref{Lem: The factorisation is (abc)(abc)}, $\Lambda$ is the set of minimal words associated to $\Pi_2$. The following proof follows the proofs of \cite[Lemmas~3.3 \& 3.4]{Zhang1992} (which consider only finitely presented special monoids) but does not rely on the technical machinery preceding it; instead, we can directly use the rewriting system $\mathcal{T}_2$.

\begin{lem}\label{Lem: Any irr invertible word is in Lambda*}
Let $u \in A^\ast$ be $\mathcal{T}_2$-irreducible and invertible. Then $u \in \Lambda^\ast$. 
\end{lem}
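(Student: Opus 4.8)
The plan is to show that any $\mathcal{T}_2$-irreducible invertible word $u$ factors as a product of words of the form $ab^ic$. I would proceed by induction on $|u|$, with the base case $u \equiv \varepsilon$ being trivial. For the inductive step, the key is to pin down how $u$ must begin. Since $u$ is invertible, it is in particular left invertible, so there is some $v \in A^\ast$ with $vu =_{\Pi_2} 1$, i.e.\ $vu \xra{\mathcal{T}_2} \varepsilon$ (using completeness of $\mathcal{T}_2$). Since $u$ itself is $\mathcal{T}_2$-irreducible, the first rewriting step applied to $vu$ must use a rule whose left-hand side straddles the boundary between $v$ and $u$, or lies entirely within $v$; tracking this, together with the observation from Lemma~\ref{Lem: The factorisation is (abc)(abc)}'s proof that $a$ is not left invertible, should force $u$ to begin with the letter $a$ (if $u$ begins with $b$ or $c$, one derives a contradiction with irreducibility or with non-invertibility of short words).

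Once I know $u \equiv a u'$, I would argue that $u$ must in fact begin with a block $ab^ic$ for some $i \geq 1$: after the leading $a$, irreducibility and the shape of the rules force a maximal run of $b$'s, say $ab^i$, and then the next letter cannot be $a$ again (else one can show a contradiction via left-invertibility as above, since a prefix of the form $ab^i a\cdots$ cannot be "completed" appropriately) nor can the word end there, so it must be $c$, giving prefix $ab^ic$. Now write $u \equiv (ab^ic) u''$. The word $ab^ic$ is invertible in $\Pi_2$, hence so is $u''$ (as $u$ and $ab^ic$ are both invertible and invertible elements form a group). Moreover $u''$ is still $\mathcal{T}_2$-irreducible, being a factor of the irreducible word $u$. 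Since $|u''| < |u|$, the induction hypothesis gives $u'' \in \Lambda^\ast$, and therefore $u \equiv (ab^ic) u'' \in \Lambda^\ast$, completing the induction.

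The main obstacle I anticipate is the careful bookkeeping in the step that forces $u$ to begin with $ab^ic$: one must rule out the possibilities that after the initial $a$ and run of $b$'s the word either terminates or is followed by another $a$, and this requires a clean argument about which words can be left-invertible. The cleanest route is probably to establish a small auxiliary fact first — something like "if $wa$ is $\mathcal{T}_2$-irreducible then $wa$ is not left invertible, and similarly $w$ ending in $ab^i$ (no trailing $c$) with $i \geq 1$ forces non-left-invertibility unless that $b$-run is the start of a genuine $ab^ic$ block" — by the same inductive-on-number-of-rewriting-steps technique used in Lemma~\ref{Lem: The factorisation is (abc)(abc)} to show $a$ is not invertible. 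Once that is in hand, the factorisation and the induction go through routinely.
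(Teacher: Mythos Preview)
Your approach is viable and takes a genuinely different route from the paper's. The paper first proves an auxiliary claim --- that every $\mathcal{T}_2$-irreducible \emph{right} invertible word lies in $\Pre(\Lambda)^\ast$ --- by the same straddling-of-the-boundary argument you describe (applied to $ww'$ rather than $vu$), and then, for a two-sidedly invertible $u \equiv p_1\cdots p_n$ with each $p_j \in \Pre(\Lambda)$, peels factors off from the right, using that $p_n$ is both left invertible (as a suffix of $u$) and right invertible (as a prefix of some $\Lambda$-word), hence invertible, hence actually in $\Lambda$. Your plan instead peels a full $\Lambda$-block off the left of $u$ in one go and inducts. Both work; yours is more direct for this specific lemma, while the paper's intermediate $\Pre(\Lambda)^\ast$ statement is closer in spirit to the general machinery of Zhang and Makanin for special monoids.

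Two points where your write-up needs tightening. First, the fact that $u$ begins with $a$ does not follow from ``$a$ is not left invertible'' but from the dual observation: since every left-hand side in $\mathcal{T}_2$ \emph{begins} with $a$, no word starting with $b$ or $c$ is \emph{right} invertible (same one-line induction on the number of rewriting steps); as $u$ is invertible it is right invertible, hence begins with $a$. Second, the obstacle you anticipate --- ruling out a prefix $ab^ia$ or the word terminating at $ab^i$ --- dissolves once you choose $v$ to be $\mathcal{T}_2$-irreducible as well. The first step of any derivation $vu \xra{\mathcal{T}_2} \varepsilon$ then genuinely straddles the boundary, so some non-empty proper prefix $u_0$ of $u$ is a proper suffix of $(ab^jc)^2$ for some $j \geq 1$. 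Since $u_0$ begins with $a$, the only such suffixes are $ab^jc$ and $(ab^jc)^2$, and the latter is excluded because the $v$-part of the redex is non-empty. Hence $u_0 \equiv ab^jc \in \Lambda$ directly, with no case analysis, and the induction closes exactly as you say.
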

\begin{proof}
We will first prove that if $w \in A^\ast$ is $\mathcal{T}_2$-irreducible and \textit{right} invertible, then $w \in \Pre(\Lambda)^\ast$. The proof is by induction on $|w|$. If $|w|=0$, then the claim is obvious. Suppose for induction that the claim is true for all right invertible words of length less than $|w|$, and that $|w|>0$. As $w$ is right invertible, there exists some $w' \in A^\ast$ such that $ww' =_{\Pi_2} 1$. Assume without loss of generality that $w'$ is $\mathcal{T}_2$-irreducible. As $\mathcal{T}_2$ is complete and defines $\Pi_2$ by Lemma~\ref{Lem: T is a CF, complete, defining system}, we have $ww' \xra{\mathcal{T}_2} 1$. As $w$ and $w'$ are $\mathcal{T}_2$-irreducible, but $ww'$ is not as $|ww'| > 0$, we can hence write $w$ and $w'$ as $w \equiv w_0 w_1$ and $w' \equiv w_0' w_1'$, with $|w_1|>0$ and $|w_0'|>0$, such that
\[
ww' \equiv w_0 w_1 w_0' w_1' \xr{\mathcal{T}_2} w_0 w_1' \xra{\mathcal{T}_2} 1,
\]
and where $w_1w_0' \equiv (ab^ic)^2$ for some $i \geq 1$. As $w_1$ is a prefix of the word $w_1 w_0' \equiv (ab^ic)^2 \in \Lambda^\ast$, we have $w_1 \in \Pre(\Lambda)^\ast$. As $|w_1|>0$, we have $|w_0|<|w|$. As $w_0$ is a prefix of the right invertible word and $\mathcal{T}_2$-irreducible word $w$, it is itself right invertible and $\mathcal{T}_2$-irreducible, and so by the inductive hypothesis $w_0 \in \Pre(\Lambda)^\ast$. Thus $w \equiv w_0 w_1 \in \Pre(\Lambda)^\ast$, and we are done with the proof of the claim.

We are now ready to prove the lemma. Let now $u \in A^\ast$ be $\mathcal{T}_2$-irreducible and invertible. Then $u$ is also right invertible; so, by the claim, let $p_1, p_2, \dots, p_n \in \Pre(\Lambda)$ be such that $u \equiv p_1 p_2 \cdots p_n$. As $p_1 p_2 \cdots p_n$ is invertible, it follows that $p_n$ is left invertible, being a suffix of an invertible word. But $p_n$ is right invertible, being a prefix of a word from $\Lambda$. Thus $p_n$ is invertible. But no non-empty proper prefix of a word from $\Lambda$ is invertible; hence $p_n \in \Lambda$. As $p_n$ and $p_1 p_2 \cdots p_n$ being invertible implies that $p_1 p_2 \cdots p_{n-1}$ is invertible, we can continue the same process to conclude that $p_1, p_2, \dots, p_n \in \Lambda$. Thus $u \in \Lambda^\ast$.
\end{proof}

The following proposition now follows directly from Lemma~\ref{Lem: T is a CF, complete, defining system} and Lemma~\ref{Lem: Any irr invertible word is in Lambda*}. 

\begin{prop}\label{Prop: Lambda* = U(M)}
The submonoid $\langle \Lambda \rangle_{\Pi_2}$ is the group of units of $\Pi_2$.
\end{prop}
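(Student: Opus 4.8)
The plan is to show the two inclusions $\langle \Lambda \rangle_{\Pi_2} \subseteq U(\Pi_2)$ and $U(\Pi_2) \subseteq \langle \Lambda \rangle_{\Pi_2}$ separately, both as quick consequences of the lemmas already proved. For the first inclusion, each word $ab^ic \in \Lambda$ is invertible in $\Pi_2$ (its inverse being $ab^ic$ itself, since $(ab^ic)^2 =_{\Pi_2} 1$), hence represents a unit; as $U(\Pi_2)$ is a submonoid of $\Pi_2$ containing these generators, it contains the submonoid they generate, i.e. $\langle \Lambda \rangle_{\Pi_2} \subseteq U(\Pi_2)$.

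For the reverse inclusion, I would take an arbitrary unit $m \in U(\Pi_2)$ and pick a word $w \in A^\ast$ representing $m$. Since $\mathcal{T}_2$ is terminating (Lemma~\ref{Lem: T is a CF, complete, defining system}), rewrite $w$ to a $\mathcal{T}_2$-irreducible word $u$ with $u =_{\Pi_2} w =_{\Pi_2}$ a unit, so $u$ is invertible in $\Pi_2$. By Lemma~\ref{Lem: Any irr invertible word is in Lambda*}, $u \in \Lambda^\ast$, and therefore $m$, being represented by $u$, lies in $\langle \Lambda \rangle_{\Pi_2}$. This gives $U(\Pi_2) \subseteq \langle \Lambda \rangle_{\Pi_2}$, and combining the two inclusions yields $\langle \Lambda \rangle_{\Pi_2} = U(\Pi_2)$.

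There is essentially no main obstacle here: the proposition is a formal corollary of the preceding two lemmas, and the only point requiring a word of care is the observation that every element of $\Pi_2$ has a $\mathcal{T}_2$-irreducible representative (which follows from termination of $\mathcal{T}_2$), so that Lemma~\ref{Lem: Any irr invertible word is in Lambda*} can be applied to it. One should also note, for the statement to make sense, that $\langle \Lambda \rangle_{\Pi_2}$ is indeed a subgroup of $\Pi_2$ with identity $1$: this is automatic once equality with $U(\Pi_2)$ is established, since $U(\Pi_2)$ is by definition the group of units, but it is worth remarking that the equality of a submonoid with the group of units is exactly the assertion that the submonoid consists precisely of the invertible elements.
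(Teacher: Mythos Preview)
Your argument is correct and is exactly the intended one: the paper simply states that the proposition follows directly from Lemma~\ref{Lem: T is a CF, complete, defining system} and Lemma~\ref{Lem: Any irr invertible word is in Lambda*}, and your two inclusions spell out precisely this deduction. There is nothing to add.
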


We note that, in the finitely presented case, the fact that the set of minimal invertible factors in general generates the group of units is \cite[Lemma~3.4]{Zhang1992} or indeed \cite[Prop~4.2]{Ivanov2001}; in the latter, it is proved for special \textit{inverse} monoids via an algorithmic graphical procedure due to J. B. Stephen \cite{Stephen1987}.\footnote{The method, which will not be used in any capacity here but which has applications also to special monoids, is known as \textit{Stephen's procedure}, and was described already for groups in 1910 by Dehn \cite{Dehn1910} in his approximating construction of the \textit{Gruppenbild}, or Cayley graph, of a group. For more details on Dehn's construction, we refer the reader to \cite[Chapter~I.5]{Chandler1982}.} 

As no word in $\Lambda$ overlaps non-trivially with any other, we have that $\Lambda$ is a code. Using this, together with the fact that all defining words of $\Pi_2$ are elements of $\Lambda^\ast$, we can easily find a presentation for the submonoid $\langle \Lambda \rangle_{\Pi_2}$ of $\Pi_2$ using the presentation of $\Pi_2$. Malheiro \cite[Theorem~2.4]{Malheiro2005} proved a general presentation theorem for what he calls ``codified'' submonoids of monoids. We will only need a very special case of this, which we now state. For ease of notation, for the statement of the theorem we temporarily forget having defined $A = \{ a, b, c\}$ and let $A$ be an arbitrary alphabet. To every code $A_0 \subseteq A^+$ we associate an alphabet $X_0$ in bijective correspondence with $A_0$ via a map $\varphi \colon A_0 \to X_0$, which we extend to an isomorphism $\varphi \colon A_0^\ast \to X_0^\ast$. 

\begin{thm}[Malheiro \cite{Malheiro2005}]\label{Thm: Malheiro's theorem 1}
Let $R = \{ (u_1, \varepsilon), (u_2, \varepsilon), \dots \}$ be a special complete rewriting system, on an alphabet $A$, which defines the monoid $M$. Let $A_0 \subseteq A^+$ be a code such that all left-hand sides of rules in $R$ are elements of $A_0^+$, and let $X_0, \varphi$ be as above. Then 
\[
\pres{Mon}{X_0}{\varphi(u_1) = 1, \varphi(u_2) = 1, \dots}
\]
is a presentation for the submonoid $\langle A_0 \rangle_{M}$.
\end{thm}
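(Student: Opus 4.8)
The plan is to exhibit mutually inverse homomorphisms between $N := \pres{Mon}{X_0}{\varphi(u_1) = 1, \varphi(u_2) = 1, \dots}$ and the submonoid $\langle A_0 \rangle_M$. Write $S$ for the special rewriting system $\{(\varphi(u_i), \varepsilon)\}_i$ on $X_0$, which defines $N$, and let $\pi \colon A^\ast \to M$ be the canonical projection. Since $A_0$ is a code, $A_0^\ast$ is free on $A_0$ and $\varphi \colon A_0^\ast \to X_0^\ast$ is an isomorphism of free monoids; composing its inverse with the inclusion $A_0^\ast \hookrightarrow A^\ast$ and with $\pi$ yields a homomorphism $\psi \colon X_0^\ast \to M$ whose image is exactly $\langle A_0 \rangle_M$. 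For each $i$ we have $\psi(\varphi(u_i)) = \pi(u_i) = 1$ in $M$, because $(u_i, \varepsilon)$ is a rule of $R$; hence $\psi$ respects the defining relations of $N$ and factors through a surjection $\bar\psi \colon N \twoheadrightarrow \langle A_0 \rangle_M$.

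It remains to construct an inverse to $\bar\psi$, and for this I would pass through $R$-normal forms. Since $R$ is complete, every $m \in M$ has a unique $R$-irreducible representative $\widehat{m} \in \IRR{R}$. First I would prove the key closure claim: if $w \in A_0^\ast$ then its $R$-normal form $\widehat{w}$ again lies in $A_0^\ast$, and moreover $\varphi(w) =_N \varphi(\widehat{w})$. Granting this, define $\theta \colon \langle A_0 \rangle_M \to N$ by $\theta(m) = [\varphi(\widehat{w})]_N$ for any $w \in A_0^\ast$ representing $m$; this is well defined by uniqueness of normal forms, is a homomorphism, and one checks directly that $\theta \circ \bar\psi$ and $\bar\psi \circ \theta$ are both the identity, giving $N \cong \langle A_0 \rangle_M$.

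The heart of the argument, and the step I expect to be the main obstacle, is the closure claim, which I would prove by induction on $\abs{w}$ (reductions here are length-decreasing, as $R$ is special). The essential point is an \emph{alignment} lemma: whenever a left-hand side $u_i$ occurs as a factor of a word $w \in A_0^\ast$, that occurrence is aligned with the unique $A_0$-factorisation of $w$, i.e.\ one may write $w \equiv x\, u_i\, y$ with $x, y \in A_0^\ast$. Here the hypothesis that $A_0$ is a code, together with the fact that each $u_i$ itself lies in $A_0^+$, must be used to rule out an occurrence straddling the boundaries between consecutive code words; this is the delicate and genuinely load-bearing part of the proof. Given alignment, a single reduction $w \equiv x u_i y \xr{R} xy$ keeps us inside $A_0^\ast$ and is mirrored at the level of $X_0^\ast$ by $\varphi(w) \equiv \varphi(x)\varphi(u_i)\varphi(y) \xr{S} \varphi(x)\varphi(y) = \varphi(xy)$, so that $\varphi(w) =_N \varphi(xy)$; applying the inductive hypothesis to the shorter word $xy \in A_0^\ast$ and invoking confluence of $R$ (which forces $\widehat{w} = \widehat{xy} \in A_0^\ast$) then yields both $\widehat{w} \in A_0^\ast$ and the relation $\varphi(w) =_N \varphi(\widehat{w})$, assembled from the individual $S$-steps. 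I expect the bookkeeping to be entirely concentrated in establishing alignment, since everything else is a routine induction once occurrences are known to respect the code structure.
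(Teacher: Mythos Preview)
The paper does not prove this theorem at all; it is quoted from Malheiro's paper \cite{Malheiro2005} and used as a black box, so there is no argument in the text to compare your proposal against.

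As for the proposal itself, there is a genuine gap: the alignment lemma you single out as ``the delicate and genuinely load-bearing part'' is simply false under the stated hypotheses, and with it your closure claim collapses. Take $A=\{a,b\}$, $R=\{(a,\varepsilon)\}$ --- trivially special and complete --- and the prefix code $A_0=\{a,\,bab\}$; then $a\in A_0^+$, so the hypotheses of the theorem hold. The word $w=bab$ lies in $A_0^\ast$, but the unique occurrence of the left-hand side $a$ sits strictly inside the single code block $bab$, so it is \emph{not} aligned with the $A_0$-factorisation, and the $R$-normal form $\widehat w = bb$ is not in $A_0^\ast$. Thus neither ``every occurrence of a left-hand side in a word of $A_0^\ast$ is aligned'' nor ``$R$-normal forms of $A_0^\ast$-words remain in $A_0^\ast$'' follows from $A_0$ being a code together with completeness of $R$. (The theorem itself is still true in this example --- both $N$ and $\langle A_0\rangle_M$ are free on one generator --- so the defect lies in your route, not in the statement.)

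Consequently you cannot define $\theta$ via $R$-normal forms. A repair must establish injectivity of $\bar\psi$ without assuming that intermediate words in an $R$-derivation, or the common normal form, stay inside $A_0^\ast$; equivalently, one must show that the congruence on $A_0^\ast$ generated by the pairs $(u_i,\varepsilon)$ \emph{within} $A_0^\ast$ coincides with the restriction to $A_0^\ast$ of the congruence they generate on $A^\ast$. That is where the code hypothesis is actually used in Malheiro's argument, and it enters more subtly than ``occurrences are aligned''. If you want to keep a normal-form strategy, you should work with the induced system $S=\{(\varphi(u_i),\varepsilon)\}$ on $X_0^\ast$ and prove \emph{its} completeness and compatibility with $=_M$ directly, rather than trying to pull $R$-normal forms back into $A_0^\ast$.
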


Redefining $A = \{ a, b, c\}$, we can now apply the above theorem to our monoid.

\begin{prop}
The group $U(\Pi_2)$ is isomorphic to the free product of infinitely many finite cyclic groups $\pres{Gp}{x_i \: (i \geq 1)}{x_i^2 = 1 \: (i \geq 1)}$. In particular, $U(\Pi_2)$ is non-finitely generated.
\end{prop}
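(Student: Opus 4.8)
The plan is to apply Malheiro's theorem (Theorem~\ref{Thm: Malheiro's theorem 1}) directly to the special complete rewriting system $\mathcal{T}_2$ together with the code $\Lambda = \{ab^ic \mid i \geq 1\}$. First I would verify the two hypotheses of the theorem in our setting: that $\Lambda$ is a code, and that every left-hand side of a rule in $\mathcal{T}_2$ lies in $\Lambda^+$. The first holds because, as already noted, no word in $\Lambda$ overlaps non-trivially with another (the words $ab^ic$ all begin with $a$ and end with $c$, with no internal occurrence of either letter, so $\Lambda$ is in fact a biprefix code). The second is immediate from the definition of $\mathcal{T}_2$: each left-hand side is $(ab^ic)^2 = (ab^ic)(ab^ic) \in \Lambda^+$. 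Introduce the alphabet $X_0 = \{x_i \mid i \geq 1\}$ with the bijection $\varphi \colon \Lambda \to X_0$ sending $ab^ic \mapsto x_i$, extended to an isomorphism $\Lambda^\ast \to X_0^\ast$.

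Next, by Theorem~\ref{Thm: Malheiro's theorem 1}, the submonoid $\langle \Lambda \rangle_{\Pi_2}$ has monoid presentation $\pres{Mon}{x_i \; (i \geq 1)}{\varphi((ab^ic)^2) = 1 \; (i \geq 1)}$, and since $\varphi((ab^ic)^2) = \varphi(ab^ic)^2 = x_i^2$, this is exactly $\pres{Mon}{x_i \; (i \geq 1)}{x_i^2 = 1 \; (i \geq 1)}$. By Proposition~\ref{Prop: Lambda* = U(M)}, this monoid is $U(\Pi_2)$. Now I would observe that a monoid presentation in which every defining relation has the form $r = 1$ with $r$ a word over the generators, and which presents a group, presents the same group as the corresponding group presentation; more directly, the monoid $\pres{Mon}{x_i}{x_i^2 = 1}$ is a group (each $x_i$ is its own inverse, and it is already known to be $U(\Pi_2)$, a group), and as a group it satisfies exactly the universal property of $\pres{Gp}{x_i \; (i\geq 1)}{x_i^2 = 1 \; (i \geq 1)}$. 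Hence $U(\Pi_2) \cong \pres{Gp}{x_i \; (i \geq 1)}{x_i^2 = 1 \; (i \geq 1)}$, which is the free product $\Conv_{i \geq 1} \mathbb{Z}/2\mathbb{Z}$ of countably infinitely many copies of the cyclic group of order $2$.

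Finally, to conclude non-finite generation: a free product $\Conv_{i \in I} G_i$ of non-trivial groups over an infinite index set $I$ is never finitely generated. One clean way to see this is via the retractions $\pi_i \colon \Conv_{j} G_j \to G_i$ (killing all free factors except the $i$-th); if the free product were generated by a finite set $S$, then only finitely many of the $\pi_i$ could be non-trivial on $S$, forcing all but finitely many $G_i$ to be trivial, a contradiction. Applying this to $U(\Pi_2) \cong \Conv_{i \geq 1} \mathbb{Z}/2\mathbb{Z}$ gives that $U(\Pi_2)$ is not finitely generated. Since $U(\Pi_2)$ is the maximal subgroup of $\Pi_2$ containing $1$, this already suffices for the stated proposition; I expect the only mild subtlety to be the bookkeeping translation between the monoid presentation output by Malheiro's theorem and the group presentation, which is routine given that we already know $\langle \Lambda \rangle_{\Pi_2}$ is a group.
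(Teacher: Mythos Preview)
Your proposal is correct and follows essentially the same route as the paper: verify the hypotheses of Malheiro's theorem for the code $\Lambda$ and the complete system $\mathcal{T}_2$, read off the monoid presentation $\pres{Mon}{x_i}{x_i^2=1}$, identify it with $U(\Pi_2)$ via Proposition~\ref{Prop: Lambda* = U(M)}, and pass to the group presentation since every generator is invertible. The only addition is your explicit retraction argument for non-finite generation of an infinite free product, which the paper leaves implicit; this is a welcome clarification but not a different idea.
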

\begin{proof}
As $\Lambda$ is a code, let $X = \{ x_1, x_2, \dots, \}$ be an alphabet in bijective correspondence with $\Lambda$ via the map induced by $ab^ic \mapsto x_i$, and let $\varphi \colon \Lambda^\ast \to X^\ast$ be the homomorphism extending this map. As all left-hand sides of rules in $\mathcal{T}_2$ are words in $\Lambda^+$, it follows by Lemma~\ref{Lem: T is a CF, complete, defining system} and Theorem~\ref{Thm: Malheiro's theorem 1} that 
\begin{align*}
\langle \Lambda \rangle_{\Pi_2} &\cong \pres{Mon}{X}{\varphi((ab^ic)^2) = 1} \\
&\cong \pres{Mon}{x_i \: (i \geq 1)}{x_i^2 = 1} \cong \pres{Gp}{x_i \: (i \geq 1)}{x_i^2 = 1},
\end{align*}
where the last isomorphism is due to the fact that the $x_i$ are all clearly invertible in the monoid $\pres{Mon}{x_i \: (i \geq 1)}{x_i^2 = 1}$. By Proposition~\ref{Prop: Lambda* = U(M)}, we have $\langle \Lambda \rangle_{\Pi_2} = U(\Pi_2)$, and the claim follows. 
\end{proof}

Thus we have proved the following.

\begin{thm}
The special monoid $\Pi_2$ is a context-free monoid with non-finitely generated group of units. 
\end{thm}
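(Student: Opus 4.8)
The plan is to simply assemble the two halves of the claim from the propositions already established in this section, since each conjunct has a dedicated result behind it. First I would invoke the Proposition stating that $\Pi_2$ has context-free word problem, which followed from Lemma~\ref{Lem: T is a CF, complete, defining system} together with the Book--Jantzen--Wrathall result that any monoid defined by a monadic complete rewriting system has context-free word problem; since $\Pi_2$ is finitely generated by $A = \{a,b,c\}$, this already shows $\Pi_2$ is a context-free monoid in the required sense.

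Next I would invoke the Proposition identifying $U(\Pi_2)$ with the free product $\pres{Gp}{x_i\ (i\geq 1)}{x_i^2 = 1\ (i\geq 1)}$ of countably infinitely many copies of the cyclic group of order $2$. The key point to carry over from there is that this group is manifestly non-finitely generated: any finite subset of the $x_i$ lies in a proper free factor, so the abelianisation is $\bigoplus_{i\geq 1}\mathbb{Z}/2\mathbb{Z}$, which is not finitely generated, and hence neither is the group itself. Combining these two observations yields the statement verbatim.

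Since the theorem is a recapitulation, there is essentially no obstacle at this stage; the substantive work was done earlier, in establishing completeness and context-freeness of $\mathcal{T}_2$ (Lemma~\ref{Lem: T is a CF, complete, defining system}), the precise minimal-factor decomposition (Lemma~\ref{Lem: The factorisation is (abc)(abc)}), the containment of irreducible invertible words in $\Lambda^\ast$ (Lemma~\ref{Lem: Any irr invertible word is in Lambda*}), the identification $\langle\Lambda\rangle_{\Pi_2} = U(\Pi_2)$ (Proposition~\ref{Prop: Lambda* = U(M)}), and the application of Malheiro's presentation theorem (Theorem~\ref{Thm: Malheiro's theorem 1}). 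The only thing to be mildly careful about is making explicit that ``context-free monoid'' as used in the theorem packages finite generation together with context-freeness of the word problem, both of which are in hand. I would therefore keep the proof to one or two sentences, citing the relevant Proposition for each conjunct.
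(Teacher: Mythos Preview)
Your proposal is correct and matches the paper's approach exactly: the theorem is stated without proof in the paper, immediately after the two Propositions you cite, with only the sentence ``Thus we have proved the following.'' Your additional remark about the abelianisation is a harmless elaboration on why the infinite free product is non-finitely generated, which the paper leaves implicit.
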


As all maximal subgroups of a special monoid are isomorphic to its group of units \cite[Theorem~4.6]{Malheiro2005}, we have thus proved the main theorem of this note.

\begin{cor}
The special monoid $\Pi_2$ is a context-free monoid, none of whose maximal subgroups is finitely generated.
\end{cor}

We remark that $U(\Pi_2)$ is not context-free (as it is not finitely generated), but it is \textit{locally} context-free, i.e. every finitely generated subgroup of $U(\Pi_2)$ is context-free. This follows from the Kurosh subgroup theorem, and the fact that any finitely generated subgroup of a context-free group is itself context-free. 

One can regard $\Pi_2$ as the result of a certain procedure: start with a family of pairwise isomorphic groups $C_{2,i} = \pres{Mon}{x_i}{x_i^2 = 1}$, each of which admits a context-free complete monadic rewriting system. Then, from a suitably chosen biprefix code (in this case $\{ ab^ic \mid i \geq 1\}$), create a presentation for $\Pi_2$ by taking as generators $a, b, c$, and as defining relations the union over $i$ of all defining relations of $C_{2,i}$ subjected to the substitution $x_i \mapsto ab^ic$. Of course, more generally, we can define $\Pi_n = \pres{Mon}{a,b,c}{(ab^ic)^n = 1 \: (i \geq 1)}$ in the same manner. It is not hard to show that the earlier analysis carries over, \textit{mutatis mutandis}, to show that $U(\Pi_n)$ is a free product of infinitely many finite cyclic groups of order $n$.

However, unlike in the case $n=2$, for $n > 3$ the language $\{ (ab^ic)^n \mid i \geq 1\}$ is \textit{not} context-free; this can be proved using the pumping lemma for context-free languages, and mirrors the fact that $\{ x_1^i x_2^i \cdots x_n^i \mid i \geq 1\}$ is a context-free language if and only if $n \leq 2$. Indeed, using a rewriting system $\mathcal{T}_n$ defined in the same manner as $\mathcal{T}_2$, it is clear that $\WP_{A}^{\Pi_n} \cap (ab^\ast c)^n \#$ is the reversal of the language $\{ (ab^ic)^n \mid i \geq 1 \}$, and hence $\WP_{A}^{\Pi_n}$ is not context-free when $n>2$, as the context-free languages are closed under intersection with regular languages and reversal. Thus the special monoid $\Pi_n$ has context-free word problem if and only if $n \leq 2$. 

The above construction is quite general. We provide one potential broader usage of it. Let $G_{n}$ be the elementary abelian $2$-group $C_2^n$ of order $m = 2^n$, and let $x_1, \dots, x_{m-1}$ be its non-trivial elements. Then $G_n$ admits a finite presentation via the multiplication table of $G_n$ as
\[
G_n = \pres{Gp}{x_1, \dots, x_{m-1}}{\mathcal{R}_n} = \pres{Mon}{x_1, \dots, x_{m-1}}{\mathcal{R}_n}
\]
where the relations $\mathcal{R}_n$ are all of the form $x_i x_j = x_k$ or $x_i x_j = 1$ where $(1 \leq i \leq j \leq k \leq m-1)$. Note that the equality between the group resp. the monoid presentation is due to the presence of relations of the form $x_i^2 = 1$ for all $1 \leq i \leq m-1$, so every $x_i$ is invertible in the monoid presentation. Let $b_1, \dots, b_{m-1}$ be new symbols. Analogously to how $\Pi_n$ was obtained from $C_n$, we define a new monoid $M_n$ by, for every $i \geq 1$, rewriting the relations in $\mathcal{R}_n$ to a set of relations $\mathcal{R}'_{i,n}$ over the alphabet $\{ a, b_1, b_2, \dots, b_{m-1}, c\}$ by performing the substitution $x_{j} \mapsto ab_j^ic$, and then defining
\[
M_n = \pres{Mon}{a, b_1, \dots, b_n, c}{\bigcup_{i \geq 1} \mathcal{R}_{i,n}'}.
\]
One may show, in the same manner as before, that the rewriting system $\bigcup_{i \geq 1} \mathcal{R}_{i,n}'$ is complete (as $\mathcal{R}_n$ is complete), and that $U(M_n)$ is a free product of countably many copies of the elementary abelian $2$-group $G_n$, and hence is not context-free. While the rewriting system is \textit{not} monadic when $n>2$, its behaviour appears to the author as rather similar to that of a context-free monadic rewriting system. We therefore suspect that $M_n$ has context-free word problem, but do not have a proof.

We end this note with a connection to coherent groups. That the group of units of $\Pi_2$ is a free product of countably many finitely generated virtually free groups has many structural implications. Indeed, it follows directly from the Kurosh subgroup theorem that, in an infinite free product of finitely presented virtually free groups, any finitely generated subgroup is finitely presented. In particular, $U(\Pi_2)$ is a \textit{coherent} group (a group $G$ is coherent if every finitely generated subgroup of $G$ is also finitely presented). Furthermore, $U(\Pi_2)$ is \textit{locally} virtually free (resp. \textit{locally} context-free), i.e. every finitely generated subgroup of $U(\Pi_2)$ is virtually free (resp. context-free). This raises the following question of how bad the structure of the group of units of a context-free monoid can be. 

\begin{question}
Does there exist a context-free monoid with an incoherent maximal subgroup? Does there exist such a monoid which is special?
\end{question}

This question can also be posed with \textit{incoherent} replaced by \textit{not locally context-free}. Coherence of groups, particularly that of one-relator groups, has recently seen a good deal of study (see especially \cite{Wise2005,Baumslag2019,Louder2020}), and recent work by Gray \& Ru\v{s}kuc \cite{Gray2021} has linked the coherence of one-relator groups to the question of groups of units of special \textit{inverse} monoids. This makes the question of coherence particularly interesting. As any \textit{finitely presented} context-free special monoid necessarily has context-free group of units (see \cite{NybergBrodda2020b}), any special monoid witnessing a positive answer to the question is necessarily non-finitely presentable.

\bibliography{contextfreeunits} 
\bibliographystyle{amsplain}

\end{document}